\documentclass[12pt, reqno]{amsart}   

\usepackage[utf8]{inputenc}           
\usepackage[T1]{fontenc}              
\usepackage{geometry}                 
\usepackage{amsmath, amssymb, amsthm} 

\usepackage{graphicx}                 
\usepackage[colorlinks=true, citecolor=blue, linkcolor=blue, urlcolor=blue]{hyperref}
\usepackage{cleveref}                 

\newtheorem{theorem}{Theorem}[section]
\newtheorem{lemma}[theorem]{Lemma}

\newtheorem{corollary}[theorem]{Corollary}

\newtheorem{assumption}[theorem]{Assumption}

\theoremstyle{definition}

\newtheorem{remark}[theorem]{Remark}

\numberwithin{equation}{section}      

\title[Volterra Integral Reduction]{Volterra Integral Reduction \\ for Boundary Diffusion Problems}

\author{Danila Shabalin}
\address{Lomonosov Moscow State University, Faculty of Mechanics and Mathematics, Moscow, Russia}
\email{danilshabalin2002@mail.ru}

\subjclass[2020]{60H30, 60J50, 45D05, 35J25} 
\keywords{Volterra integral equations, short-time asymptotics, diffusion processes, boundary value problems, second-order elliptic equations, option pricing}

\begin{document}

\begin{abstract}
This paper addresses a class of integral representations of the form
\begin{equation} \label{eq:target}
f(t,x)=g(t,x)+\int_0^t k(t,s)\, p(t-s,x,y)\, \partial_x f(s,y^+)\,ds, \qquad 0 \le t \le T,
\end{equation}
where $f$ is unknown, $p$ is the transition density of a diffusion process, and $g, k$ are prescribed functions. For an arbitrary diffusion process with sufficiently regular coefficients, we prove that this problem is equivalent to a Volterra integral equation of the second kind. This reduction provides a unified framework for both theoretical analysis and numerical approximation. An example of the implementation in the context of financial mathematics is presented.
\end{abstract}

\maketitle

\section{Introduction}
The expression \eqref{eq:target}, seemingly artificial, arises naturally in the solution of barrier option pricing \cite{guardasoni2018, mijatovic2010}, first-hitting-time problems \cite{detemple2026}, and American-style options \cite{lipton2020}, which are related to boundary Dirichlet and Stefan problems for second-order elliptic partial differential equations. The methods leading to these expressions include Green's formulas \cite{guardasoni2018}, local-time techniques \cite{detemple2026, mijatovic2010}, and the method of heat potentials \cite{lipton2020}.

Previously, resolving this problem required reducing it to a Volterra integral equation of the first kind with a weakly singular kernel, obtained by taking the limit $x \to y^+$ on both sides of \eqref{eq:target}. This yields
\begin{equation} \label{eq:VIE-first}
f(t,y)
=
g(t,y)
+
\int_0^t
k(t,s)\,
p(t-s,y,y)\,
\partial_x f(s,y^+)\,ds,
\end{equation}
where $f(t,y)$ is a known boundary value of the function $f(t,\cdot)$ at $y$ (we implicitly assume the existence of common limits for $f(t,\cdot)$, $g(t,\cdot)$, and $p(t,\cdot,y)$). Next, after solving equation \eqref{eq:VIE-first}, which is almost always done numerically, we substitute $\partial_x f$ into the initial expression \eqref{eq:target} and recover $f$. The formulation \eqref{eq:VIE-first} is ill-posed \cite[p.~33]{brunner2017} and presents certain challenges for numerical treatment; see \cite[Section~5]{detemple2026} for details. Therefore, we show that the first-kind Volterra equation can be avoided, and the problem can be directly transformed into a novel second-kind equation with a non-singular kernel, which is more suitable for numerical analysis.

To this end, in Section~\ref{sec:asympt}, we first establish the asymptotic properties of the transition density $p$ and its spatial derivative $\partial_x p$, which appear in the kernel \eqref{eq:target}. In Section~\ref{sec:VIE}, we then differentiate both sides of the resulting formula with respect to $x$ and take the limit $x \to y^+$, as before. Upon observing that one of the terms forms an approximation to the identity, this immediately yields the desired equation. Finally, in Section~\ref{sec:App}, we provide an example illustrating the applications of the new equation.

This work is a continuation of Section~6 in the previous paper \cite{detemple2026}, in which such a second-kind equation was obtained for the particular case of geometric Brownian motion in the context of the first-passage-time problem. In the present work, we generalize the approach to arbitrary diffusion meeting certain regularity conditions.

We consider a time-homogeneous Itô diffusion $X_t =\left(X_t\right)_{t\geq 0}$ defined on a filtered probability space
$(\Omega, \mathcal{F}, \mathbb{F} =(\mathcal{F}_t)_{t\geq 0}, \mathbb{P})$, where the filtration satisfies the usual conditions, and $W_t = \left(W_t\right)_{t\geq 0}$ is an $\mathbb{F}$-standard Brownian motion. The process $X_t$ is given by the stochastic differential equation
\begin{equation}\label{eq:SDE}
dX_t = b(X_t)\,dt + \sigma(X_t)\,dW_t,\qquad X_0=x\in\mathbb{R},
\end{equation}
where drift $b$ and diffusion $\sigma$ are Borel-measurable functions. In this setting, $p(t,x,y)$ denotes the transition density function of the Markov process $X_t$, defined by
$\mathbb{P}(X_t \in A \mid X_0 = x)
= \int_A p(t,x,y)\,dy$, for any Borel set $A \subset \mathbb{R}$. Throughout the paper, we consider the following global assumptions to ensure the existence of a unique solution to \eqref{eq:SDE} and the smoothness of the transition density $p$, along with the existence and uniqueness of solutions to \eqref{eq:VIE-first} and the resulting integral equation below.
\begin{assumption} \label{ass:smooth}
$\sigma \in C^4(\mathbb{R})$ and $b \in C^3(\mathbb{R})$,  and have at most linear growth.
\end{assumption}
\begin{assumption} 
$\sigma > 0$ strictly on $\mathbb{R}$.
\end{assumption}
\begin{assumption} \label{ass:func}
For a terminal time $T>0$: $g(t,x)\in C^{0,1}([0,T]\times\mathbb{R})$, $g(0,y)=0$, and $k(t,s)\in C([0,T] \times [0,T])$, with $k(t,t)\neq 0$ and $k(t,t)\neq -\sigma^2(y)$ for every $t\in[0,T]$.
\end{assumption}
We will also use the following notation for uniform asymptotic estimates on a compact set $K\subset\mathbb{R}$. We write
$f(t,x,y)=O_K(g(t))$ as $t\to0^+$ if there exists a constant $C_K>0$, depending only on $K$, such that
$|f(t,x,y)|\le C_K |g(t)|$ for all $x,y\in K$ and all sufficiently small $t>0$. Hereafter, $O_K(\cdot)$ is understood in this uniform sense, and the limit $t\to0^+$ will sometimes be omitted.
\begin{remark}
All the arguments remain valid for an inhomogeneous diffusion $X_t$ and, consequently, for its transition density $p$, as well as for a smoothly time-dependent boundary $y(t)$ in place of the flat (lower) boundary $y$. For an upper boundary $y$, we take the left-hand limit $x \to y^{-}$.
\end{remark}

\section{Short-time asymptotics} \label{sec:asympt}
Short-time asymptotics is a canonical problem that has been extensively studied in the literature \cite[Section~5]{hsu2002}. One of the first results here is the logarithmic heat kernel relation on a complete Riemannian manifold, due to Varadhan \cite{varadhan1967}. Then, using a geometric approach, Molchanov \cite{molchanov1975} derived a series representation, while Watanabe \cite{watanabe1987} developed a corresponding asymptotic expansion using Malliavin calculus on Wiener spaces. Finally, Ben Arous \cite{benarous1988,benarous1989} developed
these ideas by employing the Laplace method for hypoelliptic heat kernels and their derivatives, with coefficients determined recursively by the geometry of the manifold and variations of the action functional. Nevertheless, for our modest purposes, only the one-dimensional case is needed, as, for example, in \cite{gatheral2012}. The main result of this section can be formulated as follows.
\begin{theorem} \label{th:asymptotic}
For every compact set $K\subset\mathbb R$, the spatial derivative of the
transition density $p(t,x,y)$ admits, as $t\to0^+$, the asymptotic expansion
\begin{equation} \label{eq:density_relation}
\partial_x p(t,x,y)
=
e^{-d^2(x,y)/(2t)}\!\left[
c_0(x,y)\,t^{-3/2}
+
c_1(x,y)\,t^{-1/2}
+
O_K\!\left(
t^{1/2}
\right)
\right],
\end{equation}
uniformly for $(x,y)\in K\times K$, in which $c_0(x,y)$ and $c_1(x,y)$ are explicit coefficients given in Remark~\ref{rem:coefficients}, while $d(x,y)$ is a suitable distance defined in Lemma~\ref{lemma:BB}.
\end{theorem}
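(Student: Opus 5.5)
We use the Lamperti transform to reduce the density to that of a unit-diffusion process, then control the latter through Girsanov and a Brownian-bridge functional, and finally differentiate the resulting representation in $x$.

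\textbf{Step 1: Lamperti transform.} Set
\[
F(x)=\int_{0}^{x}\frac{du}{\sigma(u)}, \qquad d(x,y)=|F(x)-F(y)|.
\]
By It\^o's formula $Y_t=F(X_t)$ has unit diffusion coefficient and drift
\[
\mu(z)=\frac{b}{\sigma}\big(F^{-1}(z)\big)-\tfrac12\,\sigma'\big(F^{-1}(z)\big).
\]
Assumption~\ref{ass:smooth} gives $\mu\in C^3$. The densities are related by
\[
p(t,x,y)=\frac{q\big(t,F(x),F(y)\big)}{\sigma(y)}.
\]
Hence $\partial_x p=\sigma(y)^{-1}\sigma(x)^{-1}(\partial_1 q)(t,F(x),F(y))$, and it suffices to expand $\partial_u q(t,u,v)$ uniformly for $u,v$ in the compact set $F(K)$.

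\textbf{Step 2: Girsanov and the Brownian bridge.} Let $A(z)=\int_0^z\mu$. Girsanov's theorem together with It\^o's formula applied to $A(W_t)$ give
\[
q(t,u,v)=\frac{e^{-(v-u)^2/(2t)}}{\sqrt{2\pi t}}\;e^{A(v)-A(u)}\;
\mathbb E\Big[\exp\Big(-\int_0^t V(Z_s)\,ds\Big)\Big],
\]
where $V=\tfrac12(\mu^2+\mu')$ and $Z$ is the Brownian bridge from $u$ to $v$ on $[0,t]$. I expect this is the content of Lemma~\ref{lemma:BB}, which also identifies $d$ as above. Write
\[
Z_s=u+\tfrac{s}{t}(v-u)+\sqrt t\,B_{s/t},
\]
with $B$ a standard bridge on $[0,1]$. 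Then
\[
E(t,u,v):=\mathbb E\Big[\exp\Big(-t\int_0^1 V\big(u+r(v-u)+\sqrt t\,B_r\big)\,dr\Big)\Big].
\]
Taylor expansion shows $E=1-t\int_0^1 V(u+r(v-u))\,dr+O(t^{3/2})$. The same expansion holds for $\partial_u E$, by differentiating under the expectation, using $V\in C^1$ and dominated convergence. Localization to compacts handles the linear growth of $\mu$, for instance by a cutoff of $V$ outside a large ball, with an exponentially small error controlled by bridge tail bounds.

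\textbf{Step 3: differentiate.} Write $\delta=v-u$. Then
\[
\partial_u q=\frac{e^{-\delta^2/(2t)}}{\sqrt{2\pi t}}\,e^{A(v)-A(u)}
\Big[\Big(\frac{\delta}{t}-\mu(u)\Big)E+\partial_u E\Big].
\]
Substituting the expansions of $E$ and $\partial_u E$:
\begin{itemize}
\item the term $(\delta/t)\cdot 1$ gives the $t^{-3/2}$ coefficient;
\item the terms $-\mu(u)$ and $-\delta\int_0^1 V(u+r\delta)\,dr$ give the $t^{-1/2}$ coefficient;
\item the $t^{-1/2}$ part of $\partial_u E$, namely $-\int_0^1 (1-r)\,V'\,dr$ times $t$, is of order $t$ inside the bracket and contributes at order $t^{1/2}$ overall.
\end{itemize}
Altogether this yields \eqref{eq:density_relation}, with
\[
c_0=\frac{F(y)-F(x)}{\sqrt{2\pi}\,\sigma(x)\sigma(y)}\,e^{A(F(y))-A(F(x))}.
\]

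\textbf{Main obstacle.} The hardest step is a uniform $O(t^{3/2})$ remainder for $E$ multiplied by $\delta/t$: this product must be $O(t^{1/2})$, which needs the second-order term of $E$. Fortunately $\mathbb E[B_r]=0$, so the $\sqrt t$ cross term vanishes and the next correction is $O(t^2)$. After multiplying by $\delta/t$ it becomes $O(t)$ inside the bracket, i.e.\ $O(t^{1/2})$ overall. This is exactly where $V\in C^2$ is needed, which the $C^4/C^3$ regularity of Assumption~\ref{ass:smooth} provides. Uniformity over $K\times K$ then follows from the continuity of all derivatives on compacts and the Gaussian moment bounds for $\sup_r|B_r|$.
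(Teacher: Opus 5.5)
Your route is the paper's: the Lamperti transform, the Girsanov--bridge factorization of Lemma~\ref{lemma:BB}, the expansion $E=1-t\overline V+O(t^2)$ from $\mathbb E[B_r]=0$ and $V\in C^2$, a bound on $\partial_uE$, and then the product rule. The paper differentiates $\log q$ instead, which is the same computation. You are right that only $\partial_uE=O(t)$ is needed, which is less than the finer expansion proved in Lemma~\ref{lemma:bridge-expansion}. The $O(t^{3/2})$ you first state in Step~2 would not suffice, since $\delta/t$ is not small; the $O(t^2)$ you derive under ``Main obstacle'' is what is required, exactly as in the paper.

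The step that does not go through as written is the localization. Cutting off $V$ outside a large set changes $E$ and $\partial_uE$ by terms of the form $\mathbb E\bigl[(e^{-I_t}-e^{-\tilde I_t})\mathbf 1_{\Omega_t^c}\bigr]$. Here $I_t=\int_0^tV(Z_s)\,ds$, and $\Omega_t$ is the event that the bridge stays within $\delta$ of the segment from $u$ to $v$. Bridge tail bounds control $\mathbb P(\Omega_t^c)$, but not the size of the weight $e^{-I_t}$ on $\Omega_t^c$.

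To make these terms exponentially small you need a lower bound $V\ge -C_V$, so that $e^{-I_t}\le e^{C_Vt}$. You also need polynomial growth of $V$ and $V'$, so that Cauchy--Schwarz against Gaussian moments of $\sup_r|B_r|$ gives $O(e^{-c/t})$. Neither follows from Assumption~\ref{ass:smooth}, and the issue is the lower bound of $V=\tfrac12(\mu^2+\mu')$, not the growth of $\mu$. Even a bounded drift can make $V$ unbounded below: with $\sigma\equiv1$ and $b(x)=\sin(x^3)$ one gets $V(z)=\tfrac12\bigl(\sin^2(z^3)+3z^2\cos(z^3)\bigr)$. The paper closes exactly this gap by imposing Assumption~\ref{ass:potential}. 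Under that assumption your argument goes through as in the proof of Lemma~\ref{lemma:bridge-expansion}; without it, the cutoff sketch is unjustified.
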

To start, we formulate two preliminary lemmas. The first provides a Brownian bridge representation of the transition density for diffusion, which can be found in \cite{wang2015}. For completeness, we briefly recall a proof.
\begin{lemma} \label{lemma:BB}
The transition density $p(t,x,y)$ of $X_t$ admits the representation
\begin{equation}
p(t,x,y)
=
\frac{1}{\sigma(y)}
p_0(t,u,v)\,
e^{B(v)-B(u)}
\mathbb{E}^{t,v}_{0,u}
\left[
\exp\!\left(
-\int_0^t V(W_s)\,ds
\right)
\right],
\end{equation}
where $u=\psi(x)$ and $v=\psi(y)$, while
$p_0(t,u,v)=\frac{1}{\sqrt{2\pi t}}\exp{\left[\frac{(v-u)^2}{2t}\right]}$ is the Gaussian transition density. The expectation is taken with respect to a Brownian bridge from $u$ at
time $0$ to $v$ at time $t$. The functions $\psi(\cdot)$, $V(\cdot)$, and $B(\cdot)$ are introduced in the course of the proof.
\end{lemma}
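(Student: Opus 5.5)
The proof uses the classical Lamperti transformation followed by a Girsanov change of measure. The result is then read off under the Brownian bridge measure.

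First, define $\psi(x)=\int_{0}^{x}\frac{dz}{\sigma(z)}$. This is a $C^5$ strictly increasing bijection onto its range, by the standing assumptions $\sigma\in C^4$, $\sigma>0$ and linear growth. Linear growth gives $\int^{\pm\infty}dz/\sigma(z)=\pm\infty$, so the range is all of $\mathbb R$. By Itô's formula, $Y_t=\psi(X_t)$ solves
\[
dY_t=\mu(Y_t)\,dt+dW_t,\qquad \mu(u)=\frac{b(\psi^{-1}(u))}{\sigma(\psi^{-1}(u))}-\frac12\sigma'(\psi^{-1}(u)),
\]
with $Y_0=u=\psi(x)$. Since $\psi$ is a diffeomorphism, the densities are related by
\[
p(t,x,y)=\frac{1}{\sigma(y)}\,q(t,u,v),
\]
where $q$ is the transition density of $Y$ and $v=\psi(y)$. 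This produces the prefactor $1/\sigma(y)$.

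Next, set $B(u)=\int_0^u\mu(z)\,dz$ and $V=\tfrac12(\mu^2+\mu')$. We compare the law of $Y$ on $[0,t]$ with that of a Brownian motion $W$ started at $u$. Girsanov's theorem gives the density $\exp\!\big(\int_0^t\mu(W_s)\,dW_s-\tfrac12\int_0^t\mu^2(W_s)\,ds\big)$. Itô's formula applied to $B(W_t)$ gives
\[
\int_0^t\mu(W_s)\,dW_s=B(W_t)-B(u)-\tfrac12\int_0^t\mu'(W_s)\,ds.
\]
Substituting, the density becomes $e^{B(W_t)-B(u)}\exp(-\int_0^t V(W_s)\,ds)$. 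Hence, for bounded Borel $h$,
\[
\mathbb E_u[h(Y_t)]=\mathbb E_u\!\left[h(W_t)\,e^{B(W_t)-B(u)}e^{-\int_0^tV(W_s)ds}\right].
\]
Conditioning on $W_t=v$, which has Gaussian density $p_0(t,u,v)$ (the sign in the exponent is $-(v-u)^2/2t$), turns the inner expectation into a Brownian bridge expectation $\mathbb E^{t,v}_{0,u}$. Identifying densities in $v$ then yields
\[
q(t,u,v)=p_0(t,u,v)\,e^{B(v)-B(u)}\,\mathbb E^{t,v}_{0,u}\!\left[e^{-\int_0^tV(W_s)ds}\right],
\]
and combining this with the previous step gives the claimed representation.

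The main obstacle is justifying Girsanov's theorem globally. Under linear growth, $\mu$ may grow linearly, so Novikov's condition can fail on $[0,t]$, and $V$ need not be bounded below. I would handle this by localization. Let $\tau_n$ be the exit time of $W$ from $(-n,n)$ and apply the change of measure on $\{\tau_n>t\}$, where $\mu$ is bounded. Then let $n\to\infty$, using non-explosion of $Y$: the Lamperti transform preserves existence and uniqueness of $X$, which holds under the linear growth and Lipschitz-on-compacts assumptions. Non-explosion gives $\mathbb P_u(\tau_n^Y\le t)\to0$. Monotone convergence for nonnegative $h$ then extends the identity without needing integrability of the exponential martingale. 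Because the resulting bridge expectation is finite for almost every $v$ and continuous by the smoothness of $V$, the identity holds pointwise.
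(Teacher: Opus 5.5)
Your proof is correct and follows the paper's route: Lamperti transform, Girsanov against Wiener measure, Itô's formula for the antiderivative $B$ to remove the stochastic integral, and conditioning on $W_t=v$ to obtain the bridge expectation.

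The real difference is how you justify Girsanov. The paper simply \emph{assumes} the Novikov condition. You instead localize at exit times from $(-n,n)$ and pass to the limit using non-explosion of $Y$ and monotone convergence. This is strictly better. Under the standing hypotheses $\mu$ can grow linearly, or even superlinearly when $\sigma$ decays at infinity, so Novikov can genuinely fail. Taking $h\equiv 1$ in your limiting identity shows the exponential density is in fact a true martingale, so no extra hypothesis is needed.

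Two further remarks of yours are also right. Linear growth of $\sigma$ forces $\psi(\mathbb{R})=\mathbb{R}$, whereas the paper allows a possibly proper interval $I$, which fits poorly with a Brownian motion on all of $\mathbb{R}$. And $p_0$ needs a minus sign in its exponent.

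One caveat concerns your last sentence. Continuity of $v\mapsto \mathbb{E}^{t,v}_{0,u}\bigl[e^{-\int_0^t V(W_s)\,ds}\bigr]$ does not follow from smoothness of $V$ alone. It does follow by dominated convergence once $V$ is bounded below, as in Assumption~\ref{ass:potential}: write the bridge as $u+(v-u)s/t$ plus a bridge pinned at $0$. Without such a bound, your argument gives the identity only for almost every $v$. That is also all the paper's own conditioning step delivers.
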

\begin{proof}
Define the Lamperti transform $\psi(x)=\int_{x_0}^{x} \frac{1}{\sigma(z)}\, dz$. This is a strictly increasing $C^5$-diffeomorphism from $\mathbb R$ onto the open interval
$I:=\psi(\mathbb R)$. Then, the distance is given by $d(x,y) = |\psi(y) - \psi(x)|$. Setting $Y_t=\psi(X_t)$, by It\^o's formula,
\begin{equation}
dY_t=\beta(Y_t)\,dt+dW_t,
\qquad
Y_0=u,
\end{equation}
where the drift $\beta$ is defined by
\begin{equation}
\beta(\psi(x))
=
\frac{b(x)}{\sigma(x)}
-\frac{1}{2}\sigma'(x).
\end{equation}
As $b\in C^3(\mathbb R)$, $\sigma\in C^4(\mathbb R)$, and
$\sigma>0$, we have $\beta\in C^3(I)$. The transition densities $p(t,x,y)$ of $X_t$ and $q(t,u,v)$ of $Y_t$ are related by
\begin{equation}
\label{eq:density-relation}
p(t,x,y)
=
\frac{1}{\sigma(y)}
q(t,\psi(x),\psi(y)).
\end{equation}
Let $\mathbb Q_u$ denote the Wiener measure on $C([0,t];\mathbb{R})$
corresponding to a Brownian motion starting from $u$, and let $\mathbb P_u$ denote the law of the process $Y_t$. To apply Girsanov's theorem, we need the stochastic exponential below to be a true martingale; it suffices to assume the Novikov condition. In this case,
\begin{equation}
\frac{d \mathbb P_u}{d \mathbb Q_u}\bigg|_{\mathcal{F}_t}
=
\exp\!\left(
\int_0^t \beta(W_s)\,dW_s
-\frac{1}{2}\int_0^t \beta^2(W_s)\,ds
\right).
\end{equation}
Introduce the anti-derivative $B(u) = \int_{u_0}^u \beta(z)\,dz$ and define the potential
\begin{equation}
V(z)
=
\frac{1}{2}
\left(
    \beta^2(z) + \beta'(z)
\right).
\end{equation}
It follows that $V\in C^2(I)$. Applying It\^o's formula to $B(W_t)$ yields
\begin{equation}
\frac{d \mathbb P_u}{d \mathbb Q_u}\bigg|_{\mathcal{F}_t}
=
\exp\!\left(
B(W_t)-B(u)
-\int_0^t V(W_s)\,ds
\right).
\end{equation}
Hence, conditioning on $W_t=v$, we arrive at
\begin{equation}
\label{eq:qbridge}
q(t,u,v)
=
p_0(t,u,v)\,
e^{B(v)-B(u)}
\mathbb{E}^{t,v}_{0,u}
\left[
\exp\!\left(
-\int_0^t V(W_s)\,ds
\right)
\right],
\end{equation}
where the expectation $\mathbb{E}^{t,v}_{0,u} \left[ \cdot\right]$ is with respect to the Brownian bridge from $u$ at time 
$0$ to $v$ at time $t$. Inserting \eqref{eq:qbridge} into \eqref{eq:density-relation} completes the proof.
\end{proof}
We next establish the short-time expansion of the Brownian bridge expectation in \eqref{eq:qbridge}, including its derivative with respect to the initial point $x$. Considering that we need to control the contributions from Brownian bridge trajectories that leave compact subsets, we impose the following additional assumption.
\begin{assumption} \label{ass:potential}
Assume that $V$ is polynomial bounded along with its first two derivatives, and bounded below; i.e. there exist constants $C_V,m>0$ such that $|V(z)|+|V'(z)|+|V''(z)|\le C_V(1+|z|^m)$ and $V(z)\ge -C_V$ for all $z\in I$.
\end{assumption}
We also define the averaged potential
$
\overline{V}(u,v)
=
\int_0^1
V\!\big((1-\lambda)u+\lambda v\big)\,d\lambda$.
\begin{lemma}
\label{lemma:bridge-expansion}
For every compact set $K\subset\mathbb R$, we set $K':=\psi(K)$. For $u,v\in K'$, introduce
\begin{equation}
\label{eq:F-def}
F(t,u,v)
:=
\mathbb E^{t,v}_{0,u}
\left[
\exp\left(
-\int_0^tV(W_s)\,ds
\right)
\right].
\end{equation}
Then, as $t\to0^+$,
\begin{align}
F(t,u,v)
&=
1-t\overline{V}(u,v)
+
O_{K'}(t^2),
\label{eq:F-expansion}
\\
\partial_uF(t,u,v)
&=
-t\,\partial_u\overline{V}(u,v)
+
O_{K'}(t^{3/2}),
\label{eq:Fu-expansion}
\end{align}
uniformly for $(u,v)\in K'\times K'$.
\end{lemma}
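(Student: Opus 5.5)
We will represent the Brownian bridge from $u$ to $v$ on $[0,t]$ as $W_s = u + \frac{s}{t}(v-u) + \sqrt{t}\,\widetilde B_{s/t}$, where $\widetilde B$ is a standard Brownian bridge on $[0,1]$ pinned at $0$ at both ends. The law of $\widetilde B$ does not depend on $(t,u,v)$. After the change of variables $s=t\lambda$ we obtain
\[
F(t,u,v)=\mathbb E\left[\exp\left(-t\int_0^1 V\big(\ell_\lambda+\sqrt t\,\widetilde B_\lambda\big)\,d\lambda\right)\right],\qquad \ell_\lambda:=(1-\lambda)u+\lambda v .
\]
All dependence on $(u,v)$ now sits inside the integrand, and we may differentiate under the expectation. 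This gives
\[
\partial_uF=-t\,\mathbb E\left[\int_0^1(1-\lambda)V'\big(\ell_\lambda+\sqrt t\widetilde B_\lambda\big)d\lambda\;\exp(\cdots)\right].
\]
Differentiation is justified by dominated convergence. Assumption~\ref{ass:potential} gives polynomial bounds on $V'$. The lower bound $V\ge -C_V$ makes the exponential bounded by $e^{C_VT}$. Moments of $\sup_\lambda|\widetilde B_\lambda|$ are finite, with Gaussian tails, uniformly for $u,v$ in the compact $K'$.

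For \eqref{eq:F-expansion} we will set $A_t:=\int_0^1 V(\ell_\lambda+\sqrt t\widetilde B_\lambda)\,d\lambda$ and expand $V$ to second order around $\ell_\lambda$:
\[
V(\ell_\lambda+\sqrt t\widetilde B_\lambda)=V(\ell_\lambda)+\sqrt t\,V'(\ell_\lambda)\widetilde B_\lambda+\tfrac t2V''(\xi_\lambda)\widetilde B_\lambda^2 .
\]
Since $\mathbb E\widetilde B_\lambda=0$, the middle term has zero expectation. The remainder is bounded in expectation by $C t\,\mathbb E[(1+|\xi|^m)\widetilde B_\lambda^2]$, which is $O_{K'}(t)$ because $|\xi_\lambda|\le |\ell_\lambda|+\sup|\widetilde B|$ for $t\le 1$. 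Hence $\mathbb E A_t=\overline V(u,v)+O_{K'}(t)$. Next we will use $|e^{-x}-1+x|\le \tfrac{x^2}{2}e^{|x|}$, or rather $e^{-x}-1+x\le x^2e^{x^-}$. Since $V\ge-C_V$, we have $tA_t\ge -C_Vt$. This gives $\big|\mathbb E[e^{-tA_t}]-1+t\mathbb EA_t\big|\le C t^2\,\mathbb E[A_t^2]$, and $\mathbb E[A_t^2]$ is bounded uniformly on $K'$ by the polynomial growth of $V$ and the moment bounds. Combining these yields $F=1-t\overline V+O_{K'}(t^2)$.

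For \eqref{eq:Fu-expansion} we will note that $\partial_u\overline V(u,v)=\int_0^1(1-\lambda)V'(\ell_\lambda)\,d\lambda$. We will write $\partial_uF=-t\,\mathbb E[D_t e^{-tA_t}]$ with $D_t:=\int_0^1(1-\lambda)V'(\ell_\lambda+\sqrt t\widetilde B_\lambda)d\lambda$, and split
\[
D_te^{-tA_t}=\partial_u\overline V+(D_t-\partial_u\overline V)+D_t(e^{-tA_t}-1).
\]
A first-order Taylor expansion with the bound on $V''$ gives $|D_t-\partial_u\overline V|\le C\sqrt t(1+\sup|\widetilde B|)^{m+1}$, so its expectation is $O(\sqrt t)$. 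One could even get $O(t)$ by expanding once more and using the mean-zero property, but $V'''$ is not controlled, so we stop at $O(\sqrt t)$. The last term satisfies $|e^{-tA_t}-1|\le t|A_t|e^{C_Vt}$, so its expectation is $O(t)$ by Cauchy--Schwarz. Multiplying by $-t$ yields $\partial_uF=-t\,\partial_u\overline V+O_{K'}(t^{3/2})$. This also explains why the stated error order is $t^{3/2}$ rather than $t^2$.

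The main obstacle is uniform control of the tails: the bridge may leave any compact set, and $V$ is only defined on $I=\psi(\mathbb R)$. We will handle the growth through the polynomial bounds of Assumption~\ref{ass:potential}, combined with Gaussian moments of $\sup_{\lambda}|\widetilde B_\lambda|$, which are independent of $(u,v)$. If $I\neq\mathbb R$, we will interpret the bridge expectation as in Lemma~\ref{lemma:BB}, where it is implicitly assumed to be well defined. Alternatively, we will extend $V$ to $\mathbb R$ preserving the bounds. Since the paths exit $I$ with probability $O(e^{-c/t})$ uniformly for $u,v$ in a compact subset of $I$, this changes $F$ and $\partial_uF$ only by exponentially small terms, which are absorbed into the error terms.
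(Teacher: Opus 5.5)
Your proposal is correct and follows essentially the paper's argument: Brownian-bridge scaling $W_{\lambda t}=\ell_\lambda+\sqrt t\,\widetilde B_\lambda$, a second-order Taylor expansion using that the bridge has mean zero, a second-moment bound on the exponential remainder using $V\ge -C_V$, differentiation under the expectation, and the same split of $D_te^{-tA_t}$ giving the $t^{3/2}$ error. The differences are minor: you control the tails globally through the polynomial bounds on $V''$ and Gaussian moments of $\sup_\lambda|\widetilde B_\lambda|$ instead of localizing on the event $\Omega_t$, and your one-sided inequality $0\le e^{-x}-1+x\le \tfrac{x^2}{2}e^{x^-}$ is sharper than the paper's $e^{|z|}$ bound (only a lower bound on $V$ is available), while your caveat about $I\neq\mathbb R$ is unnecessary because the linear growth of $\sigma$ forces $\psi(\mathbb R)=\mathbb R$.
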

\begin{proof}
Rescale time by setting $s=\lambda t$, with $\lambda\in[0,1]$. A Brownian bridge from
$u$ to $v$ over $[0,t]$ can be represented as
\begin{equation}
W_{\lambda t}
=
c_{u,v}(\lambda)+\sqrt{t}\,\eta_\lambda,
\end{equation}
where
$c_{u,v}(\lambda):=(1-\lambda)u+\lambda v$,
and $\eta_\lambda$ is a standard Brownian bridge on $[0,1]$ (centered Gaussian,
$\mathbb{E}[\eta_\lambda]=0$, $\mathbb{E}[\eta_\lambda^2] = \lambda(1-\lambda)$).
Consequently,
\begin{equation}
\label{eq:It-def}
I_t(u,v)
:=
\int_0^t V(W_s)\,ds
=
t\int_0^1
V\!\big(
c_{u,v}(\lambda)+\sqrt{t}\,\eta_\lambda
\big)\,d\lambda.
\end{equation}
Let $K'_{\mathrm{ch}}:=\operatorname{conv}(K')$. Choose $\delta>0$ such that the compact neighborhood
\begin{equation}
K'_\delta
:=
\left\{
z\in\mathbb R:
\operatorname{dist}(z,K'_{\mathrm{ch}})\le\delta
\right\}
\end{equation}
is contained in $I$. Since $K'$ is compact, so is
$K'_{\mathrm{ch}}$, and hence $K'_\delta$ is compact. In particular,
\begin{equation}
M_j
:=
\sup_{z\in K'_\delta}|V^{(j)}(z)|
<\infty,
\qquad j=0,1,2.
\end{equation}
Although $u,v\in K'$, the Brownian bridge may leave
$K'_{\mathrm{ch}}$. Define the event
\begin{equation}
\Omega_t
:=
\left\{
\sup_{0\le \lambda\le1}
\sqrt{t}\,|\eta_\lambda|
\le\delta
\right\}.
\end{equation}
On $\Omega_t$, $ c_{u,v}(\lambda)+\sqrt{t}\,\eta_\lambda\in K'_\delta$, $\lambda\in[0,1]$. A standard Gaussian maximal estimate for the Brownian bridge gives
\begin{equation}
\label{eq:bridge-tail}
\mathbb P(\Omega_t^c)
=
\mathbb P\left(
\sup_{0\le \lambda\le1}|\eta_\lambda|
>
\frac{\delta}{\sqrt{t}}
\right)
\le
2\exp\left(
-\frac{2\delta^2}{t}
\right).
\end{equation}
On $\Omega_t$, Taylor's formula yields
\begin{equation}
V(c_{u,v}(\lambda)+\sqrt{t}\,\eta_\lambda)
=
V(c_{u,v}(\lambda))
+
V'(c_{u,v}(\lambda))\sqrt{t}\,\eta_\lambda
+
R_t^1(\lambda),
\end{equation}
where $|R_t^1(\lambda)|
\le
M_2\,t\,\eta_\lambda^2$. We estimate the expectation of
$V(c_{u,v}(\lambda)+\sqrt{t}\,\eta_\lambda)$. Since
$\mathbb E[\eta_\lambda]=0$, we may write
\begin{align}
&\mathbb E\left[
V(c_{u,v}(\lambda)+\sqrt{t}\,\eta_\lambda)
\right]
-
V(c_{u,v}(\lambda))
\nonumber\\
&\quad=
\mathbb E\left[
\left(
V(c_{u,v}(\lambda)+\sqrt{t}\,\eta_\lambda)
-
V(c_{u,v}(\lambda))
-
V'(c_{u,v}(\lambda))\sqrt{t}\,\eta_\lambda
\right)\!
\mathbf 1_{\Omega_t}
\right]
\nonumber\\
&\qquad+
\mathbb E\left[
\left(
V(c_{u,v}(\lambda)+\sqrt{t}\,\eta_\lambda)
-
V(c_{u,v}(\lambda))
-
V'(c_{u,v}(\lambda))\sqrt{t}\,\eta_\lambda
\right)\!
\mathbf 1_{\Omega_t^c}
\right].
\end{align}
The contribution from $\Omega_t$ is bounded by $M_2t$, while
the impact of $\Omega_t^c$ is exponentially small by the polynomial growth assumption on $V$ and $V'$ and the Gaussian tail estimates for the Brownian bridge. Combining these estimates gives
\begin{equation}
\label{eq:V-expectation}
\mathbb E\left[
V(c_{u,v}(\lambda)+\sqrt{t}\,\eta_\lambda)
\right]
=
V(c_{u,v}(\lambda))
+
O_{K'}(t),
\end{equation}
uniformly for $\lambda\in[0,1]$. Integrating \eqref{eq:V-expectation} with respect to $\lambda$ and using
\eqref{eq:It-def}, we find
\begin{equation}
\label{eq:EI-expansion}
\mathbb E[I_t(u,v)]
=
t\int_0^1V(c_{u,v}(\lambda))\,dr
+
O_{K'}(t^2)
=
t\overline{V}(u,v)
+
O_{K'}(t^2).
\end{equation}
Next, by Assumption~\ref{ass:potential}, $V$ is bounded below,
and therefore
\begin{equation}
\label{eq:exp-bound}
\exp(-I_t(u,v))
\le
e^{C_Vt}.
\end{equation}
Moreover, the polynomial growth assumption on $V$, together with the Gaussian moment estimates for the Brownian bridge, yields
\begin{equation}
\label{eq:It-second-moment}
\sup_{(u,v)\in K'\times K'}
\mathbb E\left[
I_t^2(u,v)
\right]
=
O_{K'}(t^2).
\end{equation}
Using
\begin{equation}
|e^{-z}-1+z|
\le
\frac12e^{|z|}z^2,
\end{equation}
together with \eqref{eq:It-second-moment} and
\eqref{eq:exp-bound}, we deduce
\begin{equation}
F(t,u,v)
=
1-\mathbb E[I_t(u,v)]
+
O_{K'}(t^2).
\end{equation}
Bringing this together with \eqref{eq:EI-expansion} proves
\eqref{eq:F-expansion}. We proceed to derive the expansion for the derivative with respect to $u$. Differentiating \eqref{eq:It-def}, we have
\begin{equation}
\label{eq:It-derivative}
\partial_u I_t(u,v)
=
t\int_0^1
(1-\lambda)
V'\!\left(
c_{u,v}(\lambda)+\sqrt{t}\,\eta_\lambda
\right)\,d\lambda.
\end{equation}
Since $V''$ is bounded on $K'_\delta$, Taylor's formula gives, on
$\Omega_t$,
\begin{equation}
V'\!\big(
c_{u,v}(\lambda)+\sqrt{t}\,\eta_\lambda
\big)
=
V'(c_{u,v}(\lambda))
+
R_t^2(\lambda),
\end{equation}
where $|R_t^2(\lambda)|
\le
M_2\sqrt{t}\,|\eta_\lambda|$. We split again the expectation into
$\Omega_t$ and $\Omega_t^c$ as above. The contribution from $\Omega_t$
is bounded by $M_2\sqrt{t}\,\mathbb E|\eta_\lambda|$, while the contribution from $\Omega_t^c$ is exponentially small by the
polynomial growth assumption on $V'$ and the Gaussian tail estimate
\eqref{eq:bridge-tail}. Hence
\begin{equation}
\label{eq:EIt-derivative}
\mathbb E[\partial_u I_t(u,v)]
=
t\int_0^1
(1-r)V'(c_{u,v}(\lambda))\,dr
+
O_{K'}(t^{3/2}).
\end{equation}
Differentiating the averaged potential, this gives
\begin{equation}
\partial_u\overline{V}(u,v)
=
\int_0^1
(1-r)V'(c_{u,v}(\lambda))\,dr.
\end{equation}
Consequently,
\begin{equation}
\label{eq:EIt-derivative-final}
\mathbb E[\partial_u I_t(u,v)]
=
t\,\partial_u\overline{V}(u,v)
+
O_{K'}(t^{3/2}).
\end{equation}
The derivative may be passed under the expectation in
\eqref{eq:F-def}. Indeed, by \eqref{eq:It-derivative} and the polynomial
growth assumption on $V'$, the random variables
$e^{-I_t}\partial_uI_t$ are uniformly integrable for sufficiently small $t$. Thus, by dominated convergence,
\begin{equation}
\partial_uF(t,u,v)
=
-\mathbb E\left[
e^{-I_t(u,v)}
\partial_uI_t(u,v)
\right].
\end{equation}
Furthermore, using $e^{-I_t}=1+O_{K'}(I_t)$ and the estimates
$ \mathbb E[I_t^2]=O_{K'}(t^2)$, $\mathbb E[|\partial_u I_t|]=O_{K'}(t)$,
we show
\begin{equation}
\mathbb E\left[
(e^{-I_t}-1)\partial_u I_t
\right]
=
O_{K'}(t^2).
\end{equation}
Accordingly, by \eqref{eq:EIt-derivative-final},
\begin{equation}
\partial_uF(t,u,v)
=
-t\,\partial_u\overline V(u,v)
+
O_{K'}(t^{3/2}), \qquad t \to 0^+,
\end{equation}
uniformly for $(u,v)\in K'\times K'$. This proves
\eqref{eq:Fu-expansion}.
\end{proof}

\begin{corollary}
\label{cor:density-expansion}
For every compact set $K\subset\mathbb R$, the transition density $p(t,x,y)$ admits
the asymptotic expansion
\begin{equation}
\label{eq:density-expansion}
p(t,x,y)
=
\frac{1}{\sigma(y)\sqrt{2\pi t}}
\exp\left(
-\frac{(\psi(y)-\psi(x))^2}{2t}
+
B(\psi(y))-B(\psi(x))
\right)
\left[
1-t\,\overline{V}(\psi(x),\psi(y))
+
O_K(t^2)
\right],
\end{equation}
as $t\to0^+$, uniformly for $(x,y)\in K\times K$.
\end{corollary}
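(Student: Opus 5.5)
The corollary follows by combining Lemma~\ref{lemma:BB} with the expansion \eqref{eq:F-expansion} of Lemma~\ref{lemma:bridge-expansion}. By Lemma~\ref{lemma:BB} and \eqref{eq:qbridge},
\begin{equation*}
p(t,x,y)=\frac{1}{\sigma(y)}\,p_0(t,\psi(x),\psi(y))\,e^{B(\psi(y))-B(\psi(x))}\,F(t,\psi(x),\psi(y)),
\end{equation*}
with $F$ as in \eqref{eq:F-def}. The Gaussian factor $p_0$ should be read with the standard negative exponent, $p_0(t,u,v)=(2\pi t)^{-1/2}\exp(-(v-u)^2/(2t))$; the sign in the statement of Lemma~\ref{lemma:BB} is evidently a typo, since otherwise $p_0$ is not a density. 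Inserting this form of $p_0$ and combining the exponentials already produces the prefactor in \eqref{eq:density-expansion}. The only remaining task is to replace $F$ by its expansion.

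Fix a compact $K\subset\mathbb R$ and put $K'=\psi(K)$. Since $\psi$ is a continuous (indeed $C^5$) diffeomorphism onto $I$, $K'$ is a compact subset of $I$, and $(x,y)\in K\times K$ if and only if $(\psi(x),\psi(y))\in K'\times K'$. Lemma~\ref{lemma:bridge-expansion} then gives
\begin{equation*}
F(t,\psi(x),\psi(y))=1-t\,\overline V(\psi(x),\psi(y))+R(t,x,y),\qquad |R(t,x,y)|\le C_{K'}t^2,
\end{equation*}
for all $(x,y)\in K\times K$ and all sufficiently small $t$. Because $C_{K'}$ depends only on $K'=\psi(K)$, it depends only on $K$. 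Hence $O_{K'}(t^2)$ evaluated at $(\psi(x),\psi(y))$ is $O_K(t^2)$ in the sense fixed in the introduction.

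Note that the remainder is kept inside the bracket, multiplying the leading factor. We therefore never need to bound the prefactor itself, which would be unbounded as $t\to0^+$ near the diagonal. Substituting the expansion of $F$ into the product representation gives exactly \eqref{eq:density-expansion}, uniformly on $K\times K$.

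There is no real obstacle here. The only points needing care are the transfer of uniformity from $K'$ to $K$ through the homeomorphism $\psi$ and the sign convention for $p_0$. The factors $1/\sigma(y)$ and $e^{B(\psi(y))-B(\psi(x))}$ are continuous on $K\times K$ and independent of $t$, so they do not interact with the asymptotics.
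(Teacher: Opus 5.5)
Your proposal is correct and matches the paper's proof: substitute the expansion \eqref{eq:F-expansion} into the bridge representation \eqref{eq:qbridge}, then pass from $q$ to $p$ via \eqref{eq:density-relation}, using $K'=\psi(K)$ to turn the uniform $O_{K'}(t^2)$ remainder into $O_K(t^2)$. Your reading of $p_0$ with the negative exponent is also the one the paper's proof uses implicitly in its displayed expansion of $q$.
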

\begin{proof}
Set $u=\psi(x)$, $v=\psi(y)$. Thus $(u,v)\in K'\times K'$, where $K'=\psi(K)$. Substituting
\eqref{eq:F-expansion} into \eqref{eq:qbridge}, we obtain
\begin{equation}
q(t,u,v)
=
\frac{1}{\sqrt{2\pi t}}
\exp\left(
-\frac{(v-u)^2}{2t}
+
B(v)-B(u)
\right)
\left[
1-t\,\overline{V}(u,v)
+
O_{K'}(t^2)
\right].
\end{equation}
Using the relation for $p(t,x,y)$ and the fact that $\psi(K)=K'$, we conclude \eqref{eq:density-expansion}.
\end{proof}

We are now ready to prove Theorem~\ref{th:asymptotic}.

\begin{proof}[Proof of Theorem~\ref{th:asymptotic}]
Fix a compact set $K\subset\mathbb R$ and set $K':=\psi(K)$. Let
$u=\psi(x)$, $v=\psi(y)$. Thus $(u,v)\in K'\times K'$. Since $y$ is fixed with respect to the differentiation in $x$, we have, by \eqref{eq:density-relation},
\begin{equation}
\label{eq:der}
\partial_x p(t,x,y)
=
\frac{1}{\sigma(x)\sigma(y)}
\partial_u q(t,u,v)
\bigg|_{u=\psi(x),\,v=\psi(y)}.
\end{equation}
According to \eqref{eq:qbridge},
\begin{equation}
\label{eq:q-factorization}
q(t,u,v)
=
\frac{1}{\sqrt{2\pi t}}
\exp\left(
-\frac{(v-u)^2}{2t}
+B(v)-B(u)
\right)
F(t,u,v),
\end{equation}
where, by Lemma~\ref{lemma:bridge-expansion}, $F(t,u,v)
=
1-t\,\overline{V}(u,v)
+
O_{K'}(t^2)
$. It is convenient to differentiate the logarithm of $q(t,u,v)$. From
\eqref{eq:q-factorization},
\begin{equation}
\log q(t,u,v)
=
-\frac12\log(2\pi t)
-\frac{(v-u)^2}{2t}
+B(v)-B(u)
+\log F(t,u,v).
\end{equation}
Hence,
\begin{equation}
\partial_u\log q(t,u,v)
=
\frac{v-u}{t}
-\beta(u)
+
\frac{\partial_uF(t,u,v)}{F(t,u,v)}.
\end{equation}
Because $F^{-1}(t,u,v)=1+O_{K'}(t)$, uniformly on compact sets, combining this with \eqref{eq:Fu-expansion}, leads to
\begin{equation}
\frac{\partial_uF(t,u,v)}{F(t,u,v)}
=
-t\,\partial_u\overline{V}(u,v)
+
O_{K'}(t^{3/2}).
\end{equation}
Therefore,
\begin{equation}
\label{eq:log-q-final}
\partial_u\log q(t,u,v)
=
\frac{v-u}{t}
-\beta(u)
-t\,\partial_u\overline{V}(u,v)
+
O_{K'}(t^{3/2}).
\end{equation}
Multiplying \eqref{eq:log-q-final} by $q(t,u,v)$ and using
\eqref{eq:q-factorization}, we obtain
\begin{equation}
    \partial_u q(t,u,v)
    =
    \frac{1}{\sqrt{2\pi t}}
    \exp\!\left(
        -\frac{(v-u)^2}{2t}
        + B(v)-B(u)
    \right)\!
    H(t,u,v),
\end{equation}
where
\begin{equation}
\label{eq:H-product}
H(t,u,v)
=
\left[
    1-t\,\overline{V}(u,v)+O_{K'}(t^2)
\right]\!
\left[
    \frac{v-u}{t}
    -\beta(u)
    -t\,\partial_u \overline{V}(u,v)
    +O_{K'}(t^{3/2})
\right].
\end{equation}
Consequently, expanding the product in \eqref{eq:H-product} gives
\begin{equation}
\begin{aligned}
    \partial_u q(t,u,v)
    =&
    \frac{1}{\sqrt{2\pi}}
    \exp\!\left(
        -\frac{(v-u)^2}{2t}
        +B(v)-B(u)
    \right)
    \\
    &\times
    \left[
        \frac{v-u}{t^{3/2}}
        -
        \frac{
            \beta(u)+(v-u)\overline{V}(u,v)
        }{t^{1/2}}
        +O_{K'}(t^{1/2})
    \right].
\end{aligned}
\end{equation}
Finally, substituting $u=\psi(x)$, $v=\psi(y)$ and $d(x,y)=|\psi(y)-\psi(x)|$ into \eqref{eq:der} yields the desired expansion, which completes the proof of the theorem.
\end{proof}
\begin{remark}\label{rem:coefficients}
The coefficients in Theorem~\ref{th:asymptotic} are given by
\begin{align}
    c_0(x,y)
    &=
    \frac{\psi(y)-\psi(x)}
    {\sigma(x)\sigma(y)\sqrt{2\pi}}
    \exp\!\left(
        B(\psi(y))-B(\psi(x))
    \right),
    \\
    c_1(x,y)
    &=
    -\frac{
        \beta(\psi(x))
        +(\psi(y)-\psi(x))
        \overline V(\psi(x),\psi(y))
    }
    {\sigma(x)\sigma(y)\sqrt{2\pi}}
    \exp\!\left(
        B(\psi(y))-B(\psi(x))
    \right).
\end{align}
\end{remark}
While the coefficients and the corresponding expansions could also be recovered from the classical results discussed above, that route would likely be no more efficient than the present method. What matters is that we have established the uniform asymptotics needed for the subsequent analysis.

\section{Boundary limits and Volterra equations} \label{sec:VIE}
Returning to the integral representation \eqref{eq:target}, we differentiate it with respect to the spatial variable $x$. Since $k(t,s)$ is assumed to be smooth and, for $x\neq y$, $\partial_x p(t-s,x,y)$ admits an integrable majorant in $s$ near $s=t$, as given by \eqref{th:asymptotic}, we have
\begin{equation}
\partial_x f(t,x)
=
\partial_x g(t,x)
+
\int_0^t
k(t,s)\,
\partial_x p(t-s,x,y)\,
\partial_x f(s,y^+)\,
ds.
\label{eq:dxV}
\end{equation}
Our goal is to analyze the limit $x\to y^+$.
Plugging the short-time expansion \eqref{eq:density_relation} into \eqref{eq:dxV}, we decompose the differentiated kernel as
\begin{equation}
\partial_x p(\tau,x,y)
=
k_0(\tau,x,y)
+
k_1(\tau,x,y)
+
r_0(\tau,x,y), \qquad \tau=t-s,
\end{equation}
in which the singular behavior near the diagonal $x=y$, as $\tau$ approaches zero, is captured by $k_0$ and $k_1$, while the continuous $r_0$ remains regular in a neighborhood of the diagonal. Note that this is the only special point of $p$ that is shared by $k_0$ and $k_1$. Introducing the notation $\ell=d(x,y)$, the leading singular contribution can be written as
\begin{equation}
k_0(\tau,x,y)
=
-\frac{\exp\!\left[
        B(\psi(y))-B(\psi(x))
    \right]}
    {\sigma(x)\sigma(y)}
\,\rho_\ell(\tau),
\end{equation}
where
\begin{equation}
\rho_\ell(\tau)
=
\frac{\ell}
{\sqrt{2\pi}\,\tau^{3/2}}
\exp\!\left(
-\frac{\ell^2}{2\tau}
\right).
\end{equation}
The family $\{\rho_\ell\}_{\ell>0}$ is the classical Le\'vy density and satisfies
\begin{equation}
\int_0^\infty \rho_\ell(\tau)\,d\tau=1.
\end{equation}
Moreover, as $\ell\to0^+$, it forms an approximation to the identity on $[0,\infty)$. Thus, for every $h\in C[0,t]$,
\begin{equation}
\lim_{\ell\to0^+}
\int_0^{t}
h(\tau)\,
\rho_\ell(\tau)\,
d\tau
=
h(0).
\label{eq:approx_identity}
\end{equation}
Applying \eqref{eq:approx_identity} with
\begin{equation}
h(\tau)
=
-\frac{\exp\!\left(
        B(\psi(y))-B(\psi(x))
    \right)}
    {\sigma(x)\sigma(y)}
\,k(t,t-\tau)\,
\partial_x f(t-\tau,y^+),
\end{equation}
we obtain
\begin{equation}
\lim_{x\to y^+}
\int_0^t
k(t,s)\,
k_0(t-s,x,y)\,
\partial_x f(s,y^+)\,
ds
=
-\frac{k(t,t)}{\sigma^2(y)}
\,\partial_x f(t,y^+).
\label{eq:jump}
\end{equation}
Hence, the first term $k_0$ produces a jump contribution concentrated on the diagonal. We next consider the second term $k_1$. Since $(t-s)^{-1/2}\in L^1(0,t)$ for every $t>0$, the corresponding kernel remains integrable near $s=t$.  Therefore, by the dominated convergence theorem,
\begin{equation}
\lim_{x\to y^+}
\int_0^t
k(t,s)\,
k_1(t-s,x,y)\,
\partial_x f(s,y^+)\,
ds
=
-\frac{1}{\sqrt{2\pi}}
\int_0^t
\frac{k(t,s)\,\beta(\psi(y))}
{\sigma^2(y)(t-s)^{1/2}}\,
\partial_x f(s,y^+)\,ds.
\label{eq:K1_limit}
\end{equation}
Thus, $k_1$ contributes to the weakly singular part of the Volterra kernel. Finally, the remainder term $r_0$ is less singular. Indeed, $r_0(\tau,y,y)=O_K(\tau^{1/2})$, uniformly on compact sets $K$, and is integrable on $[0,t]$. This yields,
\begin{equation}
\lim_{x\to y^+}
\int_0^t
k(t,s)\,
r_0(t-s,x,y)\,
\partial_x f(s,y^+)\,
ds
=
\int_0^t
k(t,s)\,
r_0(t-s,y,y)\,
\partial_x f(s,y^+)\,
ds.
\label{eq:R_limit}
\end{equation}
At this point, combining \eqref{eq:jump}, \eqref{eq:K1_limit}, and \eqref{eq:R_limit} with \eqref{eq:dxV} and taking the limit as $x\to y^+$ provides the key theorem of our analysis.
\begin{theorem}\label{th:VIE-second}
Under Assumptions~\ref{ass:smooth}--\ref{ass:func} and~\ref{ass:potential}, 
for each fixed $y$, the function $t\mapsto \partial_x f(t,y^+)$ 
is the unique solution in $C[0,T]$ of the following Volterra integral equation 
of the second kind with a weakly singular kernel
\begin{equation}
\label{eq:VIE-second}
\alpha(t,y)\,\partial_x f(t,y^+)
=
\partial_x g(t,y)
+
\int_0^t
k(t,s)\,
\frac{\gamma_0(t-s,y)}
{(t-s)^{1/2}}\,
\partial_x f(s,y^+)\,ds,
\end{equation}
where
\begin{equation}
\alpha(t,y) = 
\frac{\sigma^2(y)+k(t,t)}
{\sigma^2(y)}, \qquad
\gamma_0(\tau,y) =
\tau^{1/2} r_0(\tau,y,y)
-
\frac{\beta(\psi(y))}
{\sqrt{2\pi}\,\sigma^2(y)},
\end{equation}
and $r_0(\tau,y,y)$ is a continuous implicit function satisfying 
$r_0(\tau,y,y)=O_K(\tau^{1/2})$ uniformly on compact sets.
\end{theorem}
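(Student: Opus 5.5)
The proof has two parts: first show that $\phi(t):=\partial_x f(t,y^+)$ satisfies \eqref{eq:VIE-second}, then show that this equation has exactly one solution in $C[0,T]$.

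\textbf{Derivation.} I take the differentiated identity \eqref{eq:dxV} at a point $x>y$ close to $y$, with $y$ fixed in a compact $K$, and let $x\to y^+$ on both sides. The left side tends to $\phi(t)$ by definition. The first term on the right tends to $\partial_x g(t,y)$ because $g\in C^{0,1}$. For the integral, I split $\partial_x p=k_0+k_1+r_0$ using Theorem~\ref{th:asymptotic}, and handle the three pieces separately.

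For $k_0$, I use \eqref{eq:jump}. The function $h$ there is continuous in $\tau$ on $[0,t]$ because $k$ and $\phi$ are continuous. Its $x$-dependent prefactor $e^{B(\psi(y))-B(\psi(x))}/(\sigma(x)\sigma(y))$ converges uniformly to $1/\sigma^2(y)$. The approximation-to-the-identity property \eqref{eq:approx_identity} then gives the limit $-k(t,t)\phi(t)/\sigma^2(y)$.

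For $k_1$ and $r_0$, I use dominated convergence as in \eqref{eq:K1_limit} and \eqref{eq:R_limit}. The majorants are $C_K\tau^{-1/2}$ and $C_K$, obtained from $e^{-\ell^2/(2\tau)}\le 1$ together with the uniform remainder $O_K(\tau^{1/2})$ in Theorem~\ref{th:asymptotic}. One point needs care: the term $\ell\,\overline V$ inside $c_1$ vanishes as $\ell\to0$. So only $-\beta(\psi(y))/(\sqrt{2\pi}\sigma^2(y))\,\tau^{-1/2}$ survives, and this is the second part of $\gamma_0$.

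Moving the jump term to the left produces the coefficient $\alpha(t,y)=(\sigma^2(y)+k(t,t))/\sigma^2(y)$. Collecting the $k_1$ and $r_0$ limits under the single factor $(t-s)^{-1/2}$ gives exactly $\gamma_0$.

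\textbf{Uniqueness.} By Assumption~\ref{ass:func}, $k(t,t)\neq-\sigma^2(y)$ for every $t$. Since $t\mapsto k(t,t)$ is continuous on the compact $[0,T]$, $\alpha(\cdot,y)$ is continuous and bounded away from zero. Dividing by $\alpha$ gives the equation
\begin{equation*}
\phi(t)=\tilde g(t)+\int_0^t \frac{\tilde k(t,s)}{(t-s)^{1/2}}\phi(s)\,ds,
\end{equation*}
where $\tilde g=\partial_x g(\cdot,y)/\alpha$ is continuous and $\tilde k=k\gamma_0/\alpha$ is bounded and continuous off the diagonal. For $\tilde k$, I need $\gamma_0(\tau,y)$ to be continuous on $[0,T]$. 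This holds because $\tau^{1/2}r_0(\tau,y,y)=O(\tau)$ near $0$, and $r_0$ is continuous for $\tau>0$, being the difference of the smooth $\partial_xp$ and explicit terms.

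This is a standard Abel-type second-kind equation. I will show the operator $\mathcal K\phi(t)=\int_0^t \tilde k(t,s)(t-s)^{-1/2}\phi(s)\,ds$ maps $C[0,T]$ into itself. Its iterates then satisfy
\begin{equation*}
|\mathcal K^n\phi(t)|\le \frac{(M\Gamma(1/2))^n t^{n/2}}{\Gamma(n/2+1)}\|\phi\|_\infty,
\end{equation*}
using the Beta-function identity for the convolution of the kernels $\tau^{a-1}$. Hence the Neumann series converges, $I-\mathcal K$ is invertible on $C[0,T]$, and the solution is unique. An equivalent route is a Gronwall–Henry inequality applied to the difference of two solutions.

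\textbf{Main obstacle.} I expect the hardest step to be rigorously justifying that $\phi$ is continuous on $[0,T]$, including at $t=0$, so that it is admissible in \eqref{eq:approx_identity} and in $C[0,T]$. The uniform-in-$x$ domination near $s=t$ also needs care, since the $k_0$ part is not dominated by an $L^1$ function uniformly in $\ell$. My plan for the first issue is to assume a priori that $\phi\in C[0,T]$; this is the setting in which the representation \eqref{eq:target} is posed. The condition $g(0,y)=0$ then ensures consistency at $t=0$. For the second issue, I would avoid dominated convergence on $k_0$ altogether. Instead I write $h(\tau)=h(0)+(h(\tau)-h(0))$, and control the second part by the modulus of continuity of $h$ together with $\int_\epsilon^\infty\rho_\ell\to0$ as $\ell\to0$.
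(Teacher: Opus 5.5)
Your proposal is correct and follows the paper's route: the same split $\partial_x p=k_0+k_1+r_0$, the approximation-to-the-identity limit for the L\'evy-density part, dominated convergence for $k_1$ and $r_0$ (only the $\beta$-term of $c_1$survives), and uniqueness for the weakly singular second-kind equation, which you prove directly via the iterated-kernel bound and the Neumann series where the paper cites Theorem~1.3.5 of \cite{brunner2017}. One correction to a side remark: continuity of $\phi$ at $t=0$ is an unstated assumption in the paper as well, but $g(0,y)=0$ is not what makes it consistent, because $f(0,\cdot)=g(0,\cdot)$ forces $\phi(0)=\partial_x g(0,y)$ while \eqref{eq:VIE-second} at $t=0$ gives $\alpha(0,y)\phi(0)=\partial_x g(0,y)$, so, since $k(0,0)\neq 0$, the assumption actually requires $\partial_x g(0,y)=0$.
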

\begin{proof}
It remains only to establish the uniqueness of the continuous solution, which follows from Theorem~1.3.5 in~\cite{brunner2017}, in view of Assumption~\ref{ass:func}.
\end{proof}
\begin{remark} \label{rem:conditions}
The conditions in Assumption~\ref{ass:func} can be relaxed. If $g(0,y)\neq 0$, then the solution may become unbounded as $t\to0^+$, belonging to $C(0,T]$ and exhibiting a singularity of order $t^{-\kappa}$ for some $\kappa>0$ near the origin. If $\sigma^2(y)+k(t,t)$ vanishes on a proper subset of $[0,T]$, the consequent equation falls into the class of Volterra integral equations of the third kind; see Section~1.6 of~\cite{brunner2017}. Each modification relies on its own theory, which does not concern us here.
\end{remark}
Now, we proceed further and remove the singularity in~\eqref{eq:VIE-second}. We observe that, as a straightforward consequence of Corollary~\ref{cor:density-expansion}, the diagonal value of the transition density admits the following short-time asymptotic expansion
\begin{equation}
    p(\tau,y,y)
    =
    \frac{\tau^{-1/2}}{\sigma(y)\sqrt{2\pi}}
    + r_1(\tau,y,y),
    \qquad \tau \to 0^{+},
\end{equation}
where the continuous remainder term fulfills $r_1(\tau,y,y) = O_K(\tau^{1/2})$. Multiplying \eqref{eq:VIE-first} by $\frac{\beta(\psi(y))}{\sigma(y)}$ and adding the result to \eqref{eq:VIE-second} leads to the main result of this work.
\begin{theorem}\label{th:VIE-final}
Under Assumptions~\ref{ass:smooth}--\ref{ass:func} and~\ref{ass:potential}, 
for each fixed $y$, the function $t \mapsto \partial_x f(t,y^+)$ 
is the unique solution in $C[0,T]$ of the following Volterra integral 
equation of the second kind with a continuous kernel
\begin{equation}
\alpha(t,y)\,\partial_x f(t,y^+)
=
G(t,y)
+
\int_0^t
\gamma_1(t-s,y)\,k(t,s)\,
\partial_x f(s,y^+)\,ds.
\end{equation}
Here,
\begin{equation}
G(t,y)
=
\partial_x g(t,y)
+
\frac{\beta(\psi(y))}{\sigma(y)}
\big(g(t,y)-f(t,y)\big),
\qquad
\gamma_1(\tau,y)
=
r_0(\tau,y,y)
+
\frac{\beta(\psi(y))}{\sigma(y)}
r_1(\tau,y,y),
\end{equation}
where $r_1(\tau,y,y)$ and $r_0(\tau,y,y)$ are continuous functions exhibiting $O_K(\tau^{1/2})$ behavior uniformly for $y$ in compact sets.
\end{theorem}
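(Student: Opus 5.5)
The plan is to derive the equation algebraically from Theorem~\ref{th:VIE-second} and the boundary equation \eqref{eq:VIE-first}, and then to obtain uniqueness from the classical theory of second-kind Volterra equations with continuous kernels.

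\textbf{Step 1: the two ingredients.} Write $\phi(t):=\partial_x f(t,y^+)$ and $\beta_y:=\beta(\psi(y))$. By Theorem~\ref{th:VIE-second}, $\phi$ satisfies \eqref{eq:VIE-second}. Substituting the definition of $\gamma_0$, its kernel splits as
\begin{equation*}
k(t,s)\frac{\gamma_0(t-s,y)}{(t-s)^{1/2}}
= k(t,s)\,r_0(t-s,y,y)
- \frac{\beta_y}{\sqrt{2\pi}\,\sigma^2(y)}\,\frac{k(t,s)}{(t-s)^{1/2}}.
\end{equation*}
Next I use the diagonal expansion $p(\tau,y,y)=\tau^{-1/2}/(\sigma(y)\sqrt{2\pi})+r_1(\tau,y,y)$, which follows from Corollary~\ref{cor:density-expansion} at $x=y$. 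There the exponential factor equals $1$, and $r_1(\tau,y,y)=-\tau^{1/2}\overline V(\psi(y),\psi(y))/(\sigma(y)\sqrt{2\pi})+O_K(\tau^{3/2})$, which is continuous on $(0,T]$ and $O_K(\tau^{1/2})$. Inserting this into \eqref{eq:VIE-first} gives
\begin{equation*}
f(t,y)-g(t,y)=\int_0^t k(t,s)\Big[\frac{(t-s)^{-1/2}}{\sigma(y)\sqrt{2\pi}}+r_1(t-s,y,y)\Big]\phi(s)\,ds.
\end{equation*}

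\textbf{Step 2: cancel the weak singularity.} Multiply the last identity by $\beta_y/\sigma(y)$ and add it to \eqref{eq:VIE-second}. The singular term becomes $\frac{\beta_y}{\sqrt{2\pi}\,\sigma^2(y)}\int_0^t k(t,s)(t-s)^{-1/2}\phi(s)\,ds$. It appears with a plus sign from the rescaled \eqref{eq:VIE-first} and with a minus sign from \eqref{eq:VIE-second}, so the two copies cancel exactly. Moving $\frac{\beta_y}{\sigma(y)}(f-g)$ to the right-hand side produces $G(t,y)$, and the surviving kernel is $k(t,s)\big[r_0(t-s,y,y)+\frac{\beta_y}{\sigma(y)}r_1(t-s,y,y)\big]=k(t,s)\gamma_1(t-s,y)$. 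Because both remainders are $O_K(\tau^{1/2})$, setting $\gamma_1(0,y)=0$ makes $\gamma_1$ continuous on $[0,T]$. The resulting kernel $\gamma_1(t-s,y)k(t,s)$ is therefore continuous on the triangle $0\le s\le t\le T$.

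\textbf{Step 3: uniqueness and regularity.} By Assumption~\ref{ass:func}, $\alpha(t,y)\neq0$ and $\alpha$ is continuous in $t$, so I can divide by $\alpha$. This gives a standard second-kind equation with a continuous kernel and continuous forcing $G/\alpha$. The forcing is continuous because $g\in C^{0,1}$ and $f(\cdot,y)$ is given continuous boundary data. Existence and uniqueness in $C[0,T]$ then follow from the classical resolvent theorem, for instance \cite[Theorem~1.2.3]{brunner2017}, or from a Picard iteration with the bound $\|K\|_\infty^n t^n/n!$ together with Gronwall. Since $\phi$ already solves the equation by Steps 1--2, it is that unique solution. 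Conversely, if $\phi$ is known to be continuous from Theorem~\ref{th:VIE-second}, then it is automatically the solution here.

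\textbf{Main obstacle.} I expect the delicate point to be the continuity of $r_0(\tau,y,y)$ and $r_1(\tau,y,y)$ up to $\tau=0$, together with the legitimacy of adding the two equations. The first requires the remainders in Theorem~\ref{th:asymptotic} and Corollary~\ref{cor:density-expansion} to be genuinely $O_K(\tau^{1/2})$ on the diagonal. For $r_0$ at $x=y$ I would check this directly: $c_0(y,y)=0$, and $c_1(y,y)=-\beta_y/(\sqrt{2\pi}\sigma^2(y))$ is exactly the constant absorbed into $\gamma_0$. All other bookkeeping is algebraic.
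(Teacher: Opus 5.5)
Your proposal is correct and follows essentially the same route as the paper: multiply \eqref{eq:VIE-first} by $\beta(\psi(y))/\sigma(y)$, add it to \eqref{eq:VIE-second} so that the $(t-s)^{-1/2}$ terms cancel, and obtain uniqueness in $C[0,T]$ from \cite[Theorem~1.2.3]{brunner2017}. Your explicit checks fill in details the paper leaves implicit: that $c_0(y,y)=0$, that $c_1(y,y)=-\beta(\psi(y))/(\sqrt{2\pi}\,\sigma^2(y))$, that both remainders vanish at $\tau=0$, and that $\alpha\neq 0$ permits division.
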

\begin{proof}
As before, the uniqueness of the continuous solution stems from Theorem~1.2.3 in~\cite{brunner2017}.
\end{proof}
\begin{remark} \label{rem:diff}
 As is evident from Theorems~\ref{th:VIE-second} and~\ref{th:VIE-final}, the resulting kernels involve the remainder terms $r_0$ and $r_1$, which are specified only implicitly through their asymptotic expansions. Consequently, these representations are not intended for direct computational use. Rather, they should be understood as a methodological framework: after differentiating the original problem, employing the relevant asymptotic expansions, and passing to the limit, one obtains the Volterra integral equation stated above.   
\end{remark}

\section{Applications} \label{sec:App}
At the end, we present an example that illustrates the applications of the new expression. We consider the Black--Scholes model, in which the asset price $X_t$ follows a geometric Brownian motion with constant volatility $\sigma$ and drift $r$ under the risk-neutral measure $\mathbb{Q}$. We are interested in a down-and-out barrier option, which is one of the simplest non-vanilla derivatives. Its price at time $t$, with maturity $T$, is given by
\begin{equation}
\label{eq:payoff}
f(t,x)
=
\mathbb{E}^{\mathbb{Q}}\!
\left[
(X_T-K)^+\,
\mathbf{1}_{\left\{m_t>L\right\}}
\,\middle|\, X_t=x
\right],
\qquad
m_t=\min_{t\leq s\leq T}X_s,
\end{equation}
where $K$ is the strike price and $L$ is the lower barrier (corresponding to our boundary $y$). As shown in
\cite{detemple2026}, Theorem~2.1, the target function $f(t,x)$ satisfies the following equation of the form \eqref{eq:target}:
\begin{equation}
f(t,x)
=
C_{BS}(t,x)
-\frac{\sigma L}{2}
\int_t^T
e^{-r(s-t)}
\varphi\!\left(d^-(s-t,x,L)\right)
\frac{\Delta_L(s)}{\sqrt{s-t}}
\,ds,
\end{equation}
where $C_{BS}(t,x)$ is the price of a standard European call option, defined as
\begin{equation}
C_{BS}(t,x)
=
x \, \Phi\!\left(d^{+}(T - t, x, K)\right)
-
K e^{-r(T - t)}
\Phi\!\left(d^{-}(T - t, x, K)\right),
\end{equation}
with
\begin{equation}
d^\pm(\tau,x,K)
=
\frac{1}{\sigma\sqrt{\tau}}
\left(
\log\frac{x}{K}
+
\left(r\pm\frac{\sigma^2}{2}\right)\tau
\right).
\end{equation}
Here, $\Delta_L(t)=\partial_x f(t,L^+)$ denotes the so-called option Delta at the barrier, while $\Phi(\cdot)$ and $\varphi(\cdot)$ denote, respectively, the cumulative
distribution function and the probability density function of the standard normal distribution. Taking into account that the derivative of $\varphi$ with respect to
$x$ is given by
\begin{equation}
\label{eq:density_derivative}
\partial_{x} \varphi\!\left(d^{-}(\tau, x, L)\right)
=
-\frac{L}{x}
\frac{\ell + \mu \tau}{\sigma \sqrt{2 \pi} \, \tau^{3/2}}
\exp\!\left(
-\frac{(\ell + \mu \tau)^{2}}{2 \sigma^{2} \tau}
\right),
\qquad
\mu = r - \frac{\sigma^{2}}{2},
\end{equation}
where $\ell=\log(x/L)$. We can therefore apply the procedure described in the previous Section~\ref{sec:VIE}. To decompose \eqref{eq:density_derivative} into two singular parts of orders $1/2$ and $3/2$, we note that the regular term is absent in this case. Then, after making the appropriate substitutions and carrying out the corresponding calculations, we obtain a Volterra integral equation of the form
\eqref{eq:VIE-second}:
\begin{equation}
\Delta_L(t)
=
\partial_x C_{BS}(t,L)
+
\frac{\mu}{\sigma \sqrt{2 \pi}}
\int_{t}^{T}
e^{r(s - t)}
\exp\!\left(
\frac{\mu^{2}(s - t)}{2 \sigma^{2}}
\right)
\frac{\Delta_{L}(s)}{\sqrt{s - t}}
\, ds.
\end{equation}
Now, using Theorem~2.2 in \cite{detemple2026}, which provides an analogue of the representation \eqref{eq:VIE-first}, and adding the two resulting expressions, we arrive at the following closed-form expression for the Delta:
\begin{equation} \label{eq:closed}
\Delta_L(t)
=
2\partial_x C_{BS}(t,L)
+
\frac{2\mu}{\sigma^2 L}
C_{BS}(t,L).
\end{equation}
This indeed coincides with the Delta of the barrier option, as can be verified using \cite[Eq.~(4)]{detemple2026}. An analogous explicit result can be obtained by replacing the geometric Brownian motion in \eqref{eq:payoff} with Brownian motion, which leads to the Bachelier model.
\subsection{Practical Considerations}
Explicit expressions like \eqref{eq:closed} are rarely available in practice; this is already the case for the Ornstein--Uhlenbeck process, for instance. It brings us back to the issue noted in Remark~\ref{rem:diff}. To bypass this, we can truncate the series remainders $r_0$ and $r_1$ to compute them approximately. The resulting error $\varepsilon > 0$ in the kernels $\gamma_0^\varepsilon$ and $\gamma_1^\varepsilon$ is not critical, because the inverse operator associated with the second-kind Volterra equation is bounded, which guarantees stability via the Fredholm alternative. Accordingly, we can write estimates for the difference between the solutions $\partial_x f$ and $\partial_x f^\varepsilon$. 

From a numerical standpoint, the approach offers significant advantages; one can refer to the monograph of Brunner~\cite{brunner2004}, including Sections~2.2 and~6.2 for second-kind equations with smooth and singular kernels. Conversely, singular first-kind equations suffer from theoretical limitations for numerical treatment (see Section~6.3), which motivates our focus away from them in the present study.

\section*{Acknowledgments}
The author thanks Dr. Yerkin Kitapbayev for sharing the example in Section \ref{sec:App} and for inspiring this work.


\end{document}